\newtheorem{Cor}{Corollary}
 \newtheorem{Lemma}{Lemma}
 \newtheorem{Proposition}{Proposition}
 \theoremstyle{definition}
 \theoremstyle{remark}
 \newtheorem{Remark}[Lemma]{Remark}
 \numberwithin{equation}{subsection}
\begin{document}
\title[MATRIX CHARACTERIZATION OF MULTI-DIMENSIONAL SUBSHIFTS OF FINITE TYPE]{MATRIX CHARACTERIZATION OF MULTI-DIMENSIONAL SUBSHIFTS OF FINITE TYPE}%
\author{Puneet Sharma AND Dileep Kumar}
\address{Department of Mathematics, I.I.T. Jodhpur, Old Residency Road, Ratanada, Jodhpur-342011, INDIA}%
\email{puneet.iitd@yahoo.com}%


\subjclass{37B10, 37B20, 37B50}

\keywords{multidimensional shift spaces, shifts of finite type,
periodicity in multidimensional shifts of finite type}

\begin{abstract}
Let $X\subset A^{Z^d}$ be a $2$-dimensional subshift of finite type.
We prove that any $2$-dimensional multidimensional subshift of
finite type can be characterized by a square matrix of infinite
dimension. We extend our result to a general $d$-dimensional case.
We prove that the multidimensional shift space is non-empty if and
only if the matrix obtained is of positive dimension. In the
process, we give an alternative view of the necessary and sufficient
conditions obtained for the non-emptiness of the multidimensional
shift space. We also give sufficient conditions for the shift space
$X$ to exhibit periodic points.
\end{abstract}
\maketitle

\section{INTRODUCTION}

The study of dynamical systems originated to facilitate the study of
natural processes and phenomenon. Many natural phenomenon can be
modeled as discrete dynamical systems and their long term behavior
can be approximated using the modeled system. However, investigating
a general discrete dynamical system is complex in nature and the
long term behavior of the system cannot always be determined
accurately. The uncertainty in predicting long term behavior
introduces dynamical complexity in the system which in turn results
in erroneous behavior of the modeled system. Thus, there is a need
to develop tools to facilitate the study of a general dynamical
system which are not erroneous and can model the physical system
with the sufficient accuracy. Symbolic dynamics is one of such tools
which are structurally simpler and can be used to model the physical
system with desired accuracy. In one of the early studies, Jacques
Hadamard used symbolic dynamics to study the geodesic flows on
surfaces of negative curvature\cite{had}. Claude Shennon applied
symbolic dynamics to the field of communication to develop the
mathematical theory of communication systems\cite{shanon}. Since
then the topic has found applications in areas like data storage,
data transmission and planetary motion to name a few. The area has
also found significant applications in various branches of science
and engineering\cite{bruce,lind1}. Its simpler structure and easy
computability can be used to investigate any general dynamical
system. Infact, it is known that every discrete dynamical system can
be embedded in a symbolic dynamical system with appropriate number
of symbols \cite{fu}. Thus, to investigate a general discrete
dynamical system, it is sufficient to study the shift spaces and its
subsystems.\\

Multidimensional shift spaces has been  a topic of interest to many
researchers. In one of the early works, Schmidt investigated
multidimensional subshifts of finite type over finite number of
symbols. He proved that for a multidimensional subshift, it is
algorithmically undecidable whether an allowed partial configuration
can be extended to a point in the multidimensional shift space. He
also observed that it is algorithmically undecidable to verify the
non-emptiness of a multidimensional shift defined by a set of finite
forbidden patterns. He also gave an example to show that a
multidimensional shift space may not contain any periodic
points\cite{sc}. These results unraveled the uncertainty associated
with a multidimensional shift space and attracted attention of
several researchers around the globe. As a result, several
researchers have explored the field and a lot of work has been
done\cite{quas,ban,sc,beal,boy1,hoch1,hoch4,sam}. In \cite{quas},
authors proved that multidimensional shifts of finite type with
positive topological entropy cannot be minimal. Infact, if $X$ is
subshift of finite type with positive topological entropy, then $X$
contains a subshift which is not of finite type, and hence contains
infinitely many subshifts of finite type. Quas and Trow in the same
paper proved that every shift space $X$ contains an entropy minimal
subshift $Y$, i.e., a subshift $Y$ of $X$ such that $h(Y)=h(X)$
\cite{quas}. While \cite{ban} investigated mixing properties of
multidimensional shift of finite type, \cite{beal} investigated
minimal forbidden patterns for multidimensional shift spaces. In
\cite{boy1}, authors exhibit mixing $\mathbb{Z}^d$ shifts of finite
type and sofic shifts with large entropy. However, they establish
that such systems exhibit poorly separated subsystems. They give
examples to show that while there exists $\mathbb{Z}^d$ mixing
systems such that no non-trivial full shift is a factor for such
systems, they provide examples of sofic systems where the only
minimal subsystem is a single point. In \cite{hoch1}, for
multidimensional shifts with $d \geq 2$, authors proved that a real
number $h\geq 0$ is the entropy of a $\mathbb{Z}^d$ shift of finite
type if and only if it is the infimum of a recursive sequence of
rational numbers. In \cite{hoch4}, Hochman improved the result and
showed that $h\geq 0$ is the entropy of a $\mathbb{Z}^d$ effective
dynamical system if and only if it is the lim inf of a recursive
sequence of rational numbers. The problem of determining which class
of shifts have a dense set of periodic points is still open. For
two-dimensional shifts, Lightwood proved that strongly irreducible
shifts of finite type have dense set of periodic points \cite{sam}.
However, the problem is still open for
shifts of dimension greater than two.\\


Let $A = \{a_i : i \in I\}$ be the a finite set and let $d$ be a
positive integer. Let the set $A$ be equipped with the discrete
metric and let $A^{\mathbb{Z}^d}$, the collection of all functions
$c : \mathbb{Z}^d \rightarrow A$ be equipped with the product
topology. The function $\mathcal{D} : A^{\mathbb{Z}^d} \times
A^{\mathbb{Z}^d} \rightarrow \mathbb{R}^+$ be defined as
$\mathcal{D} (x,y) = \frac{1}{n+1}$, where $n$ is the least natural
number such that $x \neq y$ in $R_n = [-n,n]^d$, is a metric on
$A^{\mathbb{Z}^d}$ and generates the product topology. For any $a\in
\mathbb{Z}^d$, the map $\sigma_a : A^{\mathbb{Z}^d} \rightarrow
A^{\mathbb{Z}^d}$ defined as $(\sigma_a (x))_k= x_{k+a}$ is a
$d$-dimensional shift and is a homeomorphism. For any $a,b\in
\mathbb{Z}^d$, $\sigma_a \circ \sigma_b = \sigma_b \circ \sigma_a$
and hence $\mathbb{Z}^d$ acts on $A^{\mathbb{Z}^d}$ through
commuting homeomorphisms. A set $X \subseteq A^{\mathbb{Z}^d}$ is
$\sigma_a$-invariant if $\sigma_a(X)\subseteq X$. Any set $X
\subseteq A^{\mathbb{Z}^d}$ is shift-invariant if it is invariant
under $\sigma_a$ for all $a \in {Z}^d$. A non-empty, closed shift
invariant subset of $A^{\mathbb{Z}^d}$ is called a shift space. If
$Y \subseteq X$ is a closed, nonempty shift invariant subset of $X$,
then $Y$ is called a subshift of $X$. For any nonempty $S\subset
\mathbb{Z}^d$, the projection map $\pi_S :A^{\mathbb{Z}^d}
\rightarrow A^S$ defined as $\pi_S= A^{\mathbb{Z}^d}|_S$ projects
the elements of $A^{\mathbb{Z}^d}$ to $S$. Any element in $A^S$ is
called a pattern over $S$. A pattern is said to be finite if it is
defined over a finite subset of $\mathbb{Z}^d$. A pattern $q$ over
$S$ is said to be extension of the pattern $p$ over $T$ if $T\subset
S$ and $q|_T=p$. The extension $q$ is said to be proper extension if
$S\cap Bd(T)=\phi$, where $Bd(T)$ denotes the boundary of $T$. For a
shift space $X$ and any set $S\subset \mathbb{Z}^d$, the set
$\mathcal{A}_S = \{ x\in A^S: x = \pi_S(y), \text{~~for some~~} y\in
X \}$ is called the set of allowed patterns(in $X$) over $S$. The
set $\mathcal{A}= \bigcup \limits_{S\subset \mathbb{Z}^d}
\mathcal{A}_S$ is called the set of allowed patterns or language for
the shift space $X$. Given a set $S\subset \mathbb{Z}^d$ and a set
of patterns $\mathcal{P}$ in $A^{S}$, the set $X= X(S,\mathcal{P}) =
\{x \in A^{\mathbb{Z}^d} : \pi_S\circ \sigma^n(x)\in \mathcal{P}
\text{~~for every n} \in \mathbb{Z}^d\}$ is a subshift generated by
the patterns $\mathcal{P}$. If the set $S$ is finite, the subshift
generated is a subshift of finite type. Refer \cite{quas,sc,beal}
for details.\\

Although the definition of a multidimensional shift is given in
terms of the allowed patterns, an equivalent definition can be given
in terms of forbidden patterns. Such a definition provides an
alternate view of the subshifts of finite type and in some cases can
be more beneficial for further investigations. For the sake of
completion, we provide the equivalent definition below.\\

For a shift space $X$ and any set $S\subset \mathbb{Z}^d$, the set
$\mathcal{F}_S = \{ x\in A^S: x\neq \pi_S(y) \text{~~for any~~} y\in
X \}$ is called the set of forbidden patterns(in $X$) over $S$. The
set $\mathcal{F}= \bigcup \limits_{S\subset \mathbb{Z}^d}
\mathcal{F}_S$ is called the set of forbidden patterns for the shift
space $X$. For a given set of patterns $\mathcal{F}$ (possibly over
different subsets of $\mathbb{Z}^d$), define,

\centerline{$X=\{x\in A^{\mathbb{Z}^d}: \text{any pattern in~~}
\mathcal{F} \text{~~does not appear in~~} x \}$}

It can be seen that the set defined above is a shift space. If
$\mathcal{F}$ contains finitely many patterns defined over finite
subsets of $\mathbb{Z}^d$, then the shift generated is a shift of
finite type. We denote the shift space generated by the set of
forbidden patterns $\mathcal{F}$ by $X_{\mathcal{F}}$. Two forbidden
sets $\mathcal{F}_1$ and $\mathcal{F}_2$ are said to be equivalent
if they generate the same shift space, i.e. $X_{\mathcal{F}_1}=
X_{\mathcal{F}_2}$. A forbidden set $\mathcal{F}$ of patterns is
called minimal for the shift space $X$ if $\mathcal{F}$ is the set
with least cardinality such that $X=X_{\mathcal{F}}$. It is worth
mentioning that a shift space $X$ is of finite type if its minimal
forbidden set is a finite set of finite patterns.\\





%

Let $M$ be a square matrix (possibly infinite) with indices
$\{\mathfrak{i} : \mathfrak{i}\in \mathfrak{I}\}$. We say that the
index $\mathfrak{i}$ is $u$-related to $\mathfrak{j}$ if
$M_{\mathfrak{ji}}=1$. Let the collection of indices $u$-related to
$\mathfrak{j}$ be denoted by $R^u_{\mathfrak{j}}$. We say that the
indices $\mathfrak{j}$ is $d$-related to $\mathfrak{i}$ if
$M_{\mathfrak{ji}}=1$. Let the collection of indices $d$-related to
$\mathfrak{i}$ be denoted by $R^d_{\mathfrak{i}}$. It may be noted
that $\mathfrak{i}$ is $u$-related to $\mathfrak{j}$ if and only if
$\mathfrak{j}$ is $d$-related to $\mathfrak{i}$. The set
$\mathfrak{I}$ is said to be complementary if for each
$\mathfrak{i}\in\mathfrak{I}$, there exists $\mathfrak{j},
\mathfrak{k}\in \mathfrak{I}$ such that $\mathfrak{j}$ is
$u$-related to $\mathfrak{i}$ and $\mathfrak{k}$ is $d$-related to
$\mathfrak{i}$.\\

In this paper we investigate some of the questions raised in
\cite{sc}. In the process we address the problem of non-emptiness
and existence of periodic points for a multidimensional shift of
finite type. We prove that the any $2$-dimensional shift of finite
type can be characterized by an infinite square matrix of
uncountable dimension. We extend our results to a general
$d$-dimensional case. We provide necessary and sufficient conditions
for a multidimensional subshift of finite type to be non-empty. In
the end, we also give sufficient condition for the subshift to
contain periodic
points. \\

\section{Main Results}

\begin{Proposition}
$X$ is a $d$-dimensional shift of finite type $\implies$  there
exists a set $\mathcal{C}$ of $d$-dimensional cubes such that
$X=X_{\mathcal{C}}$.
\end{Proposition}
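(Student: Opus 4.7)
The strategy is to enlarge every pattern in a finite forbidden set to a common cube shape, forbidding all extensions in the process, and then verify that the two forbidden sets generate the same shift.

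Since $X$ is a shift of finite type, there exists a finite forbidden set $\mathcal{F}=\{p_1,\dots,p_m\}$ where each $p_i$ is defined on some finite support $S_{p_i}\subset\mathbb{Z}^d$. Choose $N\in\mathbb{N}$ large enough that each $S_{p_i}$ is contained in some translate of the cube $R=[0,N-1]^d$. For each $p_i\in\mathcal{F}$, define
$$\mathcal{C}_{p_i}=\bigl\{q\in A^{C}:C\text{ is a translate of }R\text{ with }S_{p_i}\subseteq C,\ q|_{S_{p_i}}=p_i\bigr\},$$
and set $\mathcal{C}=\bigcup_{i=1}^m\mathcal{C}_{p_i}$. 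Because $\mathcal{F}$ is finite, $A$ is finite, and each $S_{p_i}$ sits inside only finitely many translates of $R$, the set $\mathcal{C}$ is a finite collection of patterns whose supports are all $d$-dimensional cubes of side $N$.

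Next I would establish the two inclusions $X_{\mathcal{F}}=X_{\mathcal{C}}$. For $X_{\mathcal{F}}\subseteq X_{\mathcal{C}}$: if $x\in X_{\mathcal{F}}$ and some $q\in\mathcal{C}$ appeared in $x$ at position $v\in\mathbb{Z}^d$, then by construction $q$ is an extension of some $p_i\in\mathcal{F}$, so the restriction of $x$ to the translate $S_{p_i}+v$ would equal $p_i$, contradicting $x\in X_{\mathcal{F}}$. For $X_{\mathcal{C}}\subseteq X_{\mathcal{F}}$: if $x\in X_{\mathcal{C}}$ and some $p_i\in\mathcal{F}$ appeared in $x$ at position $v$, pick any translate $C$ of $R$ containing $S_{p_i}$; the pattern $x|_{C+v}$, viewed (after shifting back by $v$) as an element of $A^C$, is an extension of $p_i$ and therefore belongs to $\mathcal{C}_{p_i}\subseteq\mathcal{C}$, contradicting $x\in X_{\mathcal{C}}$.

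The construction is essentially routine, so there is no real obstacle; the only point requiring care is translation invariance, namely that a forbidden pattern is forbidden at every position in $\mathbb{Z}^d$, so the enlargement to cubes must also be treated as ``forbidden at every position.'' Once this bookkeeping is clear, the equality $X_{\mathcal{F}}=X_{\mathcal{C}}$ follows immediately, yielding a cube-forbidden presentation of $X$.
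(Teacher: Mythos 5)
Your proposal is correct and follows essentially the same route as the paper: replace each forbidden pattern by the finite family of its extensions to cubes of a fixed common side length, then check that the two forbidden sets generate the same shift. Your write-up of the two inclusions $X_{\mathcal{F}}\subseteq X_{\mathcal{C}}$ and $X_{\mathcal{C}}\subseteq X_{\mathcal{F}}$ is in fact more explicit than the paper's one-line justification, but the underlying idea is identical.
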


\begin{proof}
Let $X$ be a shift of finite type and let $\mathcal{F}$ be the
minimal forbidden set of patterns for the shift space $X$. It may be
noted that $\mathcal{F}$ contains finitely many patterns defined
over finite subsets of $\mathbb{Z}^d$. For any pattern $p$ in
$\mathcal{F}$, let $l_p^i$ be the length of the pattern $p$ in the
i-th direction. Let $l_p = \max\{l_P^i : i=1,2,\ldots,d\}$ denote
the width of the pattern $p$ and let $l= \max\{ l_p : p \in
\mathcal{F}\}$. Let $\mathbb{C}_l$ be the collection of
$d$-dimensional cubes of length $l$ and let
$\mathbb{E}_{\mathcal{F}}$ denote the set of extensions of patterns
in $\mathcal{F}$.  Let $\mathcal{C}= \mathbb{C}_l \cap
\mathbb{E}_{\mathcal{F}}$.  It may be observed that if $p$ is a
pattern with width $l$, forbidding a pattern $p$ for $X$ is
equivalent to forbidding all extensions $q$ of $p$ in
$\mathcal{C}_l$. Thus, each pattern in the forbidden set of width
$l$ can be replaced by an equivalent forbidden set of cubes of
length $l$ and $\mathcal{C}$ is an equivalent forbidden set for the
shift space $X$. Consequently, $X=X_{\mathcal{C}}$ and the proof is
complete.
\end{proof}

\begin{Remark}
The above result proves that every $d$-dimensional shift of finite
type is generated by a set of cubes of fixed finite length. Such a
consideration leads to an equivalent forbidden set which in general
is not minimal. The above result constructs an equivalent forbidden
set by considering all the cubes which are extension of the set of
patterns in $\mathcal{F}$. However, the cardinality of the new set
can be reduced by considering only those cubes which are not proper
extensions of patterns in $\mathcal{F}$ (but are of same size $l$).
Such a construction reduces the cardinality of the forbidden set
considerably and hence reduces the complexity of the system. It may
be noted that the forbidden set obtained on reduction is still not
minimal. However, the $d$-dimensional cubes generating the elements
of $X$ are of same size and can be used advantageously for
constructing elements of $X$. We say that a shift of finite type $X$
is generated by cubes of length $l$ if there exists a set of cubes
$\mathcal{C}$ of length $l$ such that $X=X_{\mathcal{C}}$.
\end{Remark}

\begin{Proposition}
Every $2$-dimensional shift of finite type $X$ can be characterized
by an infinite square matrix.
\end{Proposition}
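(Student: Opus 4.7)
I would first apply Proposition~2.1 to replace $X$ by the equivalent description $X=X_{\mathcal{C}}$ for some set $\mathcal{C}$ of $l\times l$ forbidden squares of a single fixed side length $l\geq 2$. This uniformity of the generating cubes is what makes a transition--matrix style description possible: the shift can be reconstructed by properly stacking horizontal strips of a fixed height, and no forbidden square is ``too large'' to be detected by looking at two consecutive strips at a time.

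Next I would take as the index set $\mathfrak{I}$ the (in general uncountable) collection of bi--infinite horizontal strips of height $l-1$, that is, those elements of $A^{\mathbb{Z}\times\{0,\ldots,l-2\}}$ that actually appear inside some point of $X$. Define the square $0/1$ matrix $M$ with indices in $\mathfrak{I}$ by declaring $M_{ij}=1$ precisely when the height-$2(l-1)$ strip obtained by placing $i$ directly below $j$ contains no square of $\mathcal{C}$, and $M_{ij}=0$ otherwise. The claim I would then formalize is: $x\in A^{\mathbb{Z}^2}$ lies in $X$ if and only if, slicing $x$ into consecutive horizontal strips $s_k = x|_{\mathbb{Z}\times\{k(l-1),\ldots,(k+1)(l-1)-1\}}$, each $s_k$ lies in $\mathfrak{I}$ and $M_{s_k s_{k+1}}=1$ for every $k\in\mathbb{Z}$.

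For the forward direction, membership $x\in X=X_{\mathcal{C}}$ forces every pair of consecutive slices to be $M$-compatible, since a forbidden pattern in the stacked pair would already lie in $x$. The converse reduces to the arithmetic observation that, since $l\leq 2(l-1)$ for $l\geq 2$, every $l\times l$ square occurring in the reconstructed configuration sits inside some two consecutive strips $s_k,s_{k+1}$; thus the pairwise conditions $M_{s_k s_{k+1}}=1$ already rule out every forbidden square anywhere in $x$, giving $x\in X_{\mathcal{C}}=X$.

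The main obstacle is twofold. First, the strip height has to be chosen carefully: one needs $l-1$ (or more generally at least $\lceil l/2\rceil$) so that no forbidden square can straddle three successive slices, since otherwise a pairwise matrix cannot witness such an obstruction. Second, one must be precise about $\mathfrak{I}$: taking it to be the set of strips actually realized in $X$, rather than merely the locally admissible ones, is what keeps the matrix free of spurious rows/columns and makes the characterization tight. No deep combinatorics is required; the substance of the result is the faithful translation of the forbidden--cube description into adjacency data of a (necessarily infinite) $0/1$ matrix, which then sets the stage for reading off non--emptiness and periodicity via the $u$/$d$-related and ``complementary'' notions from the introduction.
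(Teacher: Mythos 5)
Your construction is essentially the paper's: both index an infinite $0/1$ matrix by bi-infinite horizontal strips of a fixed height (you use $l-1$; the paper uses $l$ and first generates its strips as sequences over an auxiliary finite transition matrix on stacked $2l\times l$ blocks), with an entry equal to $1$ exactly when the two strips can be stacked without creating a forbidden $l\times l$ square, so the bi-infinite sequences of the matrix are exactly the points of $X$. One small correction: your parenthetical claim that height $\lceil l/2\rceil$ would suffice is false for $l\ge 4$ --- a forbidden square starting at the bottom row of one strip is seen by a \emph{pair} of consecutive strips only if the strip height is at least $l-1$, which is exactly the bound your actual choice meets.
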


\begin{proof}
Let $X$ be a $2$-dimensional shift of finite type and let
$\mathcal{F}$ be the equivalent set of forbidden cubes (of fixed
length, say $l$) for the space $X$. Let $\mathcal{A}$ be the
generating set of cubes (of length $l$) for the space $X$. It may be
noted that as cubes of length $l$ form a generating set for the
shift space $X$, to verify whether any $x\in A^{\mathbb{Z}^d}$
belongs to $X$, it is sufficient to examine strips of height $l$ in $x$.\\

Let $\mathcal{A}^2= \{ \left(\begin{array}{l} A \\ B
\end{array}\right): A,B \in \mathcal{A},
\left(\begin{array}{l} A \\ B
\end{array}\right) \text{~~is allowed in~~} X \}$.

By construction, $\mathcal{A}^2$ is a finite set of $2n \times n$
allowed rectangles, say $\{a_1,a_2,\ldots,a_k\}$, generating the shift space $X$.\\

Define a $k\times k$ matrix $M$ as

\begin{center}
$M_{ij} = \left\{%
\begin{array}{ll}
    0, & \hbox{$(a_i a_j) \text{~~is forbidden in~~} X $;} \\
    1  & \hbox{$(a_i a_j) \text{~~is allowed in~~} X$;} \\
\end{array}%
\right.$
\end{center}

Then, the sequence space corresponding to the matrix $M$, $\Sigma_M
= \{ (x_n) : M_{x_i x_{i+1}}=1,~~ \forall i\}$ generates all allowed
infinite strips(of height $2l$) in $X$. It may be noted that any
element in $\Sigma_M$ is element of the form $\left(\begin{array}{l}
A
\\ B \end{array}\right)$, where $A$ and $B$ are allowed infinite strips of height $l$.


Generate an infinite matrix $\mathfrak{M}$, indexed by allowed
infinite strips of height $l$, using the following algorithm:

\begin{enumerate}
\item Pick any $\left(\begin{array}{l} A
\\ B \end{array}\right)\in \Sigma_M$ and index first two rows and columns of the
matrix by $A$ and $B$. Set $\mathfrak{m}_{BA}=1$.

\item For each $\left(\begin{array}{l} A
\\ B \end{array}\right)\in \Sigma_M$, if the rows and columns
indexed $A$ and $B$ exist, set $\mathfrak{m}_{BA}=1$. Else, label
next row and/or column as $A$ and/or $B$ (whichever required) and
set $\mathfrak{m}_{BA}=1$.

\item In the infinite matrix generated in step $2$, set $\mathfrak{m}_{BA}=0$,
if $\mathfrak{m}_{BA}$ has so far not been assigned a value.

\item In the infinite matrix obtained, if there exists an index $A$
such that the $A$-th row or column is zero, delete the $A$-th row
and column from the matrix generated.
\end{enumerate}

The above algorithm generates an infinite $0$-$1$ matrix where
$\mathfrak{m}_{BA}=1$ if and only if $\left(\begin{array}{l} A
\\ B \end{array}\right)$ is allowed in $X$, where $A$ and $B$ are allowed infinite strips (of height $l$) in
$X$. Let $\Sigma_{\mathfrak{M}}$ be the sequence space associated
with the matrix $\mathfrak{M}$. Consequently, any sequence in
$\Sigma_{\mathfrak{M}}$ gives a vertical arrangement of infinite
allowed strips (of height $l$) such that the arrangement is allowed
in $X$ and hence generates an element in $X$. Conversely, any
element in $X$ is a sequential (vertical) arrangement of infinite
strips of height $l$ and hence is generated by a sequence in
$\Sigma_{\mathfrak{M}}$. Consequently, $X= \Sigma_{\mathfrak{M}}$
and the proof is complete.
\end{proof}


\begin{Remark}
The above result characterizes elements of the shift space $X$ by a
infinite square matrix $\mathfrak{M}$. It may be noted that if
row/column for an index $A$ is zero, the algorithm deletes the row
and column with index $A$. Such a criteria reduces the size of the
matrix and will result in a matrix of dimension $0$, if the shift
space is empty. Further, the characterization of the space yields a
matrix of infinite (uncountable) dimension. Consequently, it is
undecidable whether a shift of finite type generated by set of cubes
$\mathcal{A}$ is non-empty. It may be noted that although the
algorithm does not guarantee a positive dimensional matrix, if the
shift space $X$ is non-empty the matrix generated is definitely of
positive dimension and characterizes the elements in $X$. Further,
as each row/column of the matrix generated has atleast one non-zero
entry, each block indexing the matrix can be extended to an element
of $X$. Consequently, any submatrix of the matrix $\mathfrak{M}$
cannot generate the shift space $X$. In light of the remark stated,
we get the following result.
\end{Remark}

\begin{Cor}
A $2$-dimensional shift of finite type is non-empty if and only if
the characterizing matrix $\mathfrak{M}$ is of positive dimension.
Further, any proper submatrix of the matrix $\mathfrak{M}$ generates
a proper subshift and hence the matrix $\mathfrak{M}$ is minimal.
\end{Cor}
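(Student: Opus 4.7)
The plan is to read off both parts of the corollary from Proposition 2 together with the non-vanishing property of rows and columns that is built into step 4 of the algorithm defining $\mathfrak{M}$. Since Proposition 2 gives $X=\Sigma_{\mathfrak{M}}$, the non-emptiness of $X$ is equivalent to the existence of a bi-infinite sequence in $\mathfrak{M}$, which is in turn equivalent to $\mathfrak{M}$ having positive dimension.

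For the forward direction of the equivalence, I would take any $x\in X$ and slice it into horizontal infinite strips $A_n$ of height $l$; each consecutive pair $(A_n,A_{n+1})$ is an allowed rectangle in $\mathcal{A}^2$, so each $A_n$ indexes a row (and column) with a nonzero entry and hence survives step 4. Thus $\mathfrak{M}$ is of positive dimension. Conversely, if an index $A_0$ survives in $\mathfrak{M}$, the non-vanishing property supplied by step 4 furnishes indices $A_1,A_{-1}$ with $\mathfrak{m}_{A_1 A_0}=\mathfrak{m}_{A_0 A_{-1}}=1$; iterating this inductive choice in both directions produces a bi-infinite sequence in $\Sigma_{\mathfrak{M}}$, which by Proposition 2 is an element of $X$. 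This is a straightforward induction along the vertical axis and requires no compactness argument.

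For the minimality statement, I would show that any proper submatrix of $\mathfrak{M}$ generates a proper subshift. Suppose the submatrix is obtained by deleting an index $A$. Running the extension argument above starting at $A$ yields an element $x_A\in X$ whose horizontal strip at row $0$ is precisely $A$. Since the decomposition of $x_A$ into height-$l$ strips is unique, any sequence representation of $x_A$ must pass through the index $A$; removing $A$ from $\mathfrak{M}$ therefore removes $x_A$ from the generated shift. Hence the submatrix generates a strictly smaller shift, and $\mathfrak{M}$ is minimal. The main subtle point of the argument is recognising that step 4 ensures simultaneous two-sided extendability of every surviving index (not merely the non-triviality of $\mathfrak{M}$); once that observation is in place, both assertions fall out by direct construction.
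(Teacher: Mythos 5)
Your argument is correct and follows essentially the same route as the paper, which justifies this corollary informally in the Remark immediately preceding it: positive dimension is equivalent to non-emptiness because step 4 deletes every index with a zero row or column, and minimality follows because every surviving index extends (two-sidedly) to an element of $X$ that is lost when that index is removed. Your writeup merely makes the two-sided induction and the uniqueness of the strip decomposition explicit, which the paper leaves implicit.
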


\begin{Remark}
Although, in general it is undecidable whether a multidimensional
shift of finite type is non-empty, the non-emptiness problem can be
addressed using submatrices of the matrix $\mathfrak{M}$. In
particular, if there exists a submatrix $\mathfrak{N}$ of
$\mathfrak{M}$ (say generated after finite/countable steps of
algorithm) such that the space $\Sigma_{\mathfrak{N}}$ is non-empty,
the shift space $X$ is non-empty. As any non-empty shift space
characterized by a finite dimensional matrix contains periodic
points, such a verification (in finite time) cannot be conducted for
a shift space without periodic points. Further, such a verification
addresses only the non-emptiness problem and does not characterize
the elements in the shift space.
\end{Remark}

\begin{Remark}
For a shift space $X$, with generating set of cubes of height $l$,
let $\mathcal{L}$ denote set of all allowed infinite strips of
height $l$. Recalling the notions of $u$-related indices for a
square matrix $M$, for any two infinite strips $A,B$ of height $l$,
we say that $A$ is $u$-related ($d$-related) to $B$ if $A$ and $B$
are indices of $M$ such that $M_{BA}=1~~ (M_{AB}=1)$. Further,
generalizing the definition, a family of allowed infinite strips of
height $l$ is complementary if for each $A$ in $\mathcal{L}$ there
exists infinite strips $B,C\in\mathcal{L}$ such that $B$ is
$u$-related to $A$ and $C$ is $d$-related to $A$. Thus, the
algorithm generates $u$-related ($d$-related) infinite strips for
the shift space $X$ which in turn generates an arbitrary element of
$X$. As any element of the shift space is a sequential arrangement
of $u$-related ($d$-related) infinite strips, the characterization
of the elements of the space $X$ by a matrix $\mathfrak{M}$ is
equivalent to finding all the $u$-related ($d$-related) pairs of
infinite strips for the space $X$. As any infinite strip of height
$l$ (say $A$) can be extended to an element of $X$ only if there
exists infinite strips $B,C$ of height $l$ such that $B$ is
$u$-related to $A$ and $C$ is $d$-related to $A$, only members of
complementary family can form the building blocks for an element of
$X$. As a result, we get the following corollary.
\end{Remark}

\begin{Cor}
Let $X$ be a multidimensional shift space generated by cubes of
length $l$ and let $\mathfrak{B}$ be the infinite strips of height
$l$ allowed in $X$. Then, the shift space $X$ is non-empty if and
only if there exists non-empty set of indices
$\mathfrak{B}_0\subseteq \mathfrak{B}$ such that $\mathfrak{B}_0$ is
complementary.
\end{Cor}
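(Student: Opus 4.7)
The plan is to prove the two directions separately, with the forward direction being essentially a book-keeping exercise and the backward direction requiring an inductive extension argument.

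For the forward implication, I would assume $X \neq \emptyset$ and take any $x \in X$. Since $X$ is generated by cubes of length $l$, the element $x$ decomposes as a bi-infinite vertical stacking of horizontal strips of height $l$; explicitly, for each $k \in \mathbb{Z}$ let $S_k(x)$ denote the strip occupying the rows indexed $[kl,(k+1)l-1] \times \mathbb{Z}$. Set
\[
\mathfrak{B}_0 = \{ S_k(x) : x \in X,\ k \in \mathbb{Z} \}.
\]
Given any $A \in \mathfrak{B}_0$, write $A = S_k(x)$ for some $x \in X$. Then $B = S_{k+1}(x)$ lies in $\mathfrak{B}_0$ and the pair $\left(\begin{array}{l} A \\ B\end{array}\right)$ is allowed in $X$ (it is a sub-pattern of $x$), so $B$ is $u$-related to $A$; similarly $C = S_{k-1}(x) \in \mathfrak{B}_0$ is $d$-related to $A$. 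Thus $\mathfrak{B}_0$ is complementary and non-empty.

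For the backward direction, I would assume $\mathfrak{B}_0 \subseteq \mathfrak{B}$ is non-empty and complementary, and construct a point of $X$. Pick any $A_0 \in \mathfrak{B}_0$. Using the complementary hypothesis repeatedly, inductively choose $A_{k+1} \in \mathfrak{B}_0$ that is $u$-related to $A_k$ for each $k \geq 0$, and choose $A_{k-1} \in \mathfrak{B}_0$ that is $d$-related to $A_k$ for each $k \leq 0$. The countable axiom of choice handles the selections. Now define $x \in A^{\mathbb{Z}^2}$ by stacking: on the rows $[kl,(k+1)l-1] \times \mathbb{Z}$ place the strip $A_k$. Because each consecutive pair $\left(\begin{array}{l} A_k \\ A_{k+1}\end{array}\right)$ is allowed in $X$, every $2l \times \mathbb{Z}$ sub-configuration of $x$ is allowed, and hence every cube of length $l$ appearing in $x$ is allowed. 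Since $X$ is generated by its allowed cubes of length $l$ (by Proposition 1 and the preceding proposition on the matrix characterization), this certifies $x \in X$, so $X \neq \emptyset$.

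The main obstacle I expect is justifying that stacking the strips $\{A_k\}_{k \in \mathbb{Z}}$ produces a genuine element of $X$ rather than merely a configuration whose cubes straddling the stacking boundaries need separate verification. Because each $A_k$ has height exactly $l$, every cube of length $l$ in the stacked configuration lies either entirely inside one strip $A_k$ (hence is allowed, as $A_k$ is itself an allowed strip in $X$) or straddles exactly one boundary between $A_k$ and $A_{k+1}$, in which case it is a sub-pattern of the $2l$-high strip $\left(\begin{array}{l} A_k \\ A_{k+1}\end{array}\right)$, which is allowed by the $u$-relation. So every length-$l$ cube in $x$ is allowed, and the definition of $X$ as a subshift generated by cubes of length $l$ yields $x \in X$. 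This completes the proof, and also makes transparent why the complementary condition is the natural necessary and sufficient criterion.
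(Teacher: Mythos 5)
Your proof is correct and follows essentially the same route as the paper, which justifies this corollary via the remark preceding it: an element of $X$ is a bi-infinite vertical stacking of height-$l$ strips, so a point of $X$ yields a complementary family of its constituent strips, and conversely a complementary family lets one inductively select a $u$/$d$-related bi-infinite chain whose stacking contains no forbidden cube of length $l$ and hence lies in $X$. Your write-up is in fact more explicit than the paper's (particularly the observation that every length-$l$ cube in the stacked configuration meets at most two consecutive strips, which is the step the paper leaves implicit), but the underlying decomposition and argument are the same.
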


\begin{Remark}
The above result provides an alternate view of the criteria
established for the non-emptiness of the space $X$. The result does
not require the matrix $\mathfrak{M}$ for establishing the
non-emptiness for the shift space. The set of indices during
construction of the matrix may be observed at each iteration and
existence of a complementary subfamily can be used to establish the
non-emptiness of the space $X$. However, as the algorithm does not
provide any
optimal technique for picking the block $\left(\begin{array}{l} A \\
B \end{array}\right)$ at each iteration,  such a consideration does
not reduce the time complexity and the problem of non-emptiness is
still undecidable. However, algorithms for optimal selection of the
infinite blocks $\left(\begin{array}{l} A \\ B \end{array}\right)$
may be proposed which in turn may reduce the time complexity of the
algorithm. As any multidimensional shift can be realized as an
extension of a $2$-dimensional shift, similar results are true for a
general $d$-dimensional shift of finite type. For the sake of
completion, we include the proof of main result below.
\end{Remark}

\begin{Proposition}
If $X$ is a $d$-dimensional shift of finite type, then the elements
of $X$ can be determined by an infinite square matrix.
\end{Proposition}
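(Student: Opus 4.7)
The plan is to lift the construction from the previous proposition (the two-dimensional case) by replacing the role of infinite strips of height $l$ with infinite $(d-1)$-dimensional slabs of thickness $l$ taken along a chosen coordinate direction. First, by the first Proposition we may assume $X$ is generated by a set $\mathcal{A}$ of $d$-dimensional cubes of a common side length $l$, with complementary forbidden set $\mathcal{F}$. Fix the $d$-th coordinate as the distinguished direction, and let $\mathcal{L}$ denote the family of all allowed configurations over $\mathbb{Z}^{d-1}\times\{0,1,\ldots,l-1\}$, i.e.\ those infinite $(d-1)$-dimensional slabs of thickness $l$ in which no pattern of $\mathcal{F}$ appears.

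Next, mimicking the two-dimensional construction, form the finite-type collection
\[
\mathcal{A}^2=\left\{\left(\begin{array}{l} A \\ B\end{array}\right): A,B\in\mathcal{L},\ \left(\begin{array}{l} A \\ B\end{array}\right) \text{ is allowed in } X\right\},
\]
and use it to define an infinite $0$-$1$ matrix $\mathfrak{M}$ indexed by the members of $\mathcal{L}$, with $\mathfrak{m}_{BA}=1$ precisely when the vertical stacking of $A$ on top of $B$ is allowed in $X$. Apply the same four-step labeling algorithm as in the previous proof: pick an allowed pair, index new rows and columns as needed, fill in zeros for unseen entries, and finally delete any index whose row or column is identically zero (which strips away slabs that cannot be extended on at least one side).

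Finally, identify $X$ with the sequence space $\Sigma_{\mathfrak{M}}$. Any $x\in X$ decomposes uniquely into a bi-infinite sequence $(A_n)_{n\in\mathbb{Z}}$ of slabs in $\mathcal{L}$ by slicing along the $d$-th coordinate, with $\mathfrak{m}_{A_{n+1}A_n}=1$ for every $n$; conversely, any admissible sequence reassembles into a configuration in $A^{\mathbb{Z}^d}$, and pairwise compatibility of consecutive slabs suffices to exclude every cube of $\mathcal{F}$, so the reassembled configuration lies in $X$. This yields $X=\Sigma_{\mathfrak{M}}$.

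The main obstacle is precisely this last verification, i.e.\ showing that pairwise compatibility of consecutive slabs is enough to rule out every forbidden cube. The point is geometric: a $d$-dimensional cube of side $l$ projects to an interval of length $l$ on the $d$-th axis, and any such interval meets at most two of the disjoint consecutive intervals of length $l$ that we used to partition $\mathbb{Z}$ along the distinguished direction. Hence every potential forbidden cube in the reassembled configuration is contained in the union of two consecutive slabs $A_n,A_{n+1}$, and is therefore caught by the condition $\mathfrak{m}_{A_{n+1}A_n}=1$. Once this observation is in place, the rest of the argument is formally identical to the two-dimensional case, and the resulting $\mathfrak{M}$ is again an infinite square matrix (of uncountable dimension in general) whose associated sequence space recovers $X$.
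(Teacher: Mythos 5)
Your proof is correct, but it takes a genuinely different route from the paper's. You generalize the two-dimensional construction in a single step: the index set of your matrix is the family $\mathcal{L}$ of allowed $(d-1)$-dimensional slabs of thickness $l$ (infinite in $d-1$ coordinates at once), and you justify $X=\Sigma_{\mathfrak{M}}$ by the locality observation that a forbidden cube of side $l$ meets at most two consecutive thickness-$l$ slabs along the distinguished axis — this is exactly the right lemma, and it is the point the paper leaves implicit. The paper instead iterates the two-dimensional algorithm $d-1$ times: it starts from a \emph{finite} matrix $\mathfrak{M}^0$ indexed by finite cuboids of size $2l\times\cdots\times 2l\times l$, whose sequence space produces strips infinite in one direction; these index a matrix $\mathfrak{M}^1$ whose sequence space produces objects infinite in two directions; and so on, adding one infinite direction per stage until the final matrix characterizes $X$. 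What your one-shot version buys is brevity and a transparent verification of $X=\Sigma_{\mathfrak{M}}$; what the paper's iterative version buys is that the index sets are themselves generated stage by stage from finite data (each stage's indices are points of the previous stage's sequence space), which is what the surrounding remarks about examining the construction ``after finitely/countably many steps'' and about complementary families rely on. One small caution about your write-up: when you define $\mathcal{L}$ as the slabs ``in which no pattern of $\mathcal{F}$ appears'' you are using local admissibility, whereas ``allowed in $X$'' elsewhere in the paper means globally admissible (extendable to a point of $X$); your argument goes through under either reading (the inclusion $\Sigma_{\mathfrak{M}}\subseteq X$ needs only local admissibility of consecutive pairs, and $X\subseteq\Sigma_{\mathfrak{M}}$ follows by slicing), but you should fix one convention, since the two notions do not coincide for multidimensional shifts of finite type.
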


\begin{proof}
Let $X$ be a $d$-dimensional shift of finite type and let
$\mathcal{F}$ be the equivalent set of forbidden cubes (of fixed
length, say $l$) for the space $X$. Let $\mathcal{A}$ be the
generating set of cuboids of size $\underbrace{2l\times
2l\times\ldots 2l}_{d-1 times} \times l$  for the space $X$.

%

By construction, $\mathcal{A}$ is a finite set of allowed
rectangles, say $\{a_1,a_2,\ldots,a_k\}$. Define a $k\times k$
matrix $\mathfrak{M}^0$ as

\begin{center}
$\mathfrak{M}^0_{ij} = \left\{%
\begin{array}{ll}
    0, & \hbox{$(a_i a_j) \text{~~is forbidden in~~} X $;} \\
    1  & \hbox{$(a_i a_j) \text{~~is allowed in~~} X$;} \\
\end{array}%
\right.$
\end{center}

where $(a_i a_j)$ denotes adjacent placement of $a_j$ with $a_i$ in
the positive $d$-th direction.

Then, the sequence space corresponding to the matrix
$\mathfrak{M}^0$, $\Sigma_{\mathfrak{M}^0} = \{ (x_n) :
\mathfrak{M}^0_{x_i x_{i+1}}=1,~~ \forall i\}$ generates all allowed
one directional (in $d$-th direction) infinite strips in $X$.

It may be noted that any element in $\Sigma_{\mathfrak{M}^0}$ is
element of the form $\left(\begin{array}{l} A \\ B
\end{array}\right)_{0}$, where $A$ and $B$ are allowed infinite
strips (in direction $d$) of dimension $\underbrace{2l\times
2l\times\ldots 2l}_{d-2 times}\times l\times \infty$ and
$\left(\begin{array}{l} A \\ B
\end{array}\right)_{0}$ denotes adjacent placement of $B$ with $A$ in
the negative $d-1$-th direction.


Generate an infinite matrix $\mathfrak{M}^1$, indexed by allowed
infinite strips of dimension $\underbrace{2l\times 2l\times\ldots
2l}_{d-2 times}\times l\times \infty$ , using the following
algorithm:

\begin{enumerate}
\item Pick any $\left(\begin{array}{l} A
\\ B \end{array}\right)_0\in \Sigma_{\mathfrak{M}^0}$ and index first two rows and columns of the
matrix by $A$ and $B$. Set $\mathfrak{m}_{BA}=1$.

\item For each $\left(\begin{array}{l} A
\\ B \end{array}\right)_0\in \Sigma_{\mathfrak{M}^0}$, if the rows and columns
indexed $A$ and $B$ exist, set $\mathfrak{m}_{BA}=1$. Else, label
next row and/or column as $A$ and/or $B$ (whichever required) and
set $\mathfrak{m}_{BA}=1$.

\item In the infinite matrix generated in step $2$, set $\mathfrak{m}_{BA}=0$,
if $\mathfrak{m}_{BA}$ has so far not been assigned a value.

\item In the infinite matrix obtained, if there exists an index $A$
such that the $A$-th row or column is zero, delete the $A$-th row
and column from the matrix.
\end{enumerate}

The above algorithm generates an infinite $0$-$1$ matrix where
$\mathfrak{m}_{BA}=1$ if and only if $\left(\begin{array}{l} A
\\ B \end{array}\right)_0$ is allowed in $X$, where $A$ and $B$ are of
dimension $\underbrace{2l\times 2l\times\ldots 2l}_{d-2 times}\times
l\times \infty$. Let $\Sigma_{\mathfrak{M}^1}$ denote the sequence
space corresponding to the matrix generated above. It can be seen
that the space $\Sigma_{\mathfrak{M}^1}$ precisely is the collection
of allowed bi-infinite strips (in direction $d$ and $d-1$). Further,
as any element in $\Sigma_{\mathfrak{M}^1}$ is of the form
$\left(\begin{array}{l} A \\ B
\end{array}\right)_{1}$, where $A$ and $B$ are allowed infinite
strips (in direction $d$ and $d-1$) of dimension
$\underbrace{2l\times 2l\times\ldots 2l}_{d-3 times}\times l\times
\infty\times \infty$ and $\left(\begin{array}{l} A \\ B
\end{array}\right)_{1}$ denotes adjacent placement of $B$ with $A$ in
the negative $d-2$-th direction, a repeated application of the
algorithm generates a matrix $\mathfrak{M}^2$ which extends the
infinite patterns in $\Sigma_{\mathfrak{M}^1}$ along the direction
$d-3$ to generate the space $\Sigma_{\mathfrak{M}^2}$. Consequently,
repeated application of the above algorithm extends the allowed
patterns infinitely in all the $d$ directions (one direction at each
step) to obtain a point in $X$. Further, as any point in $X$ can be
visualized as such an extension of allowed cubes in the $d$
directions, the matrix obtained (at the final step) characterizes
the elements of the space $X$.


\end{proof}

\begin{Remark}
The above result characterizes the multidimensional shift space by a
matrix $\mathfrak{M}$. The result is a repeated application of the
$2$-dimensional case, extending the allowed block in each of the $d$
directions. In the process, at each step $i$ we obtain an infinite
matrix characterizing the extension of an allowed block in the
$i$-th direction. Although the rows and columns of the
characterizing matrix $\mathfrak{M}$ are indexed by infinite blocks
allowed in $X$, their existence/verfication is beyond any ambiguity
as they are algorithmically generated. It may be noted that
extension in any of the directions (at step $i$) does not guarantee
an extension to the element of $X$. In particular, a block
extendable in a direction $i$ (or in a few directions
$i_1,i_2,\ldots,i_r$) need not necessarily extend to an element in
$X$. In particular if the shift space is empty, although we may
obtain matrices of positive dimension in initial few steps, the
final matrix obtained characterizing the elements of $X$ is
$0$-dimensional. Consequently, once again, the shift space is
non-empty if and only if the matrix generated (at the final step) is
of positive dimension. As the algorithm is an extension of the
algorithm for the $2$-dimensional case, results similar to the
$2$-dimensional case also hold good for any general dimension $d\geq
3$. For the sake of completion, we mention the generalizations
below.
\end{Remark}

\begin{Cor}
A multidimensional shift of finite type is non-empty if and only if
the characterizing matrix $\mathfrak{M}$ is of positive dimension.
Further, any proper submatrix of the matrix $\mathfrak{M}$ generates
a proper subshift and hence the matrix $\mathfrak{M}$ is minimal.
\end{Cor}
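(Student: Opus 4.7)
The plan is to adapt the argument used for the $2$-dimensional corollary, now applied to the final matrix $\mathfrak{M} = \mathfrak{M}^{d-1}$ obtained by the $(d-1)$-fold iteration of the algorithm in the preceding Proposition. The proof divides naturally into two parts: the equivalence between non-emptiness of $X$ and positive dimension of $\mathfrak{M}$, and the minimality claim. Both parts rest on the observation that, after step~$4$ of each iteration, the surviving set of indices is automatically complementary in the sense defined earlier, i.e.\ every row and every column of $\mathfrak{M}$ contains at least one $1$.

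For the forward direction of the equivalence, I would take an arbitrary $x \in X$ and decompose it stage by stage into the allowed blocks used in the construction. At stage~$0$, $x$ reads off as a sequence of allowed cuboids placed along the $d$-th direction; at stage~$1$ the resulting infinite strips become indices of $\mathfrak{M}^1$; and so on up to stage $d-1$. Since none of the indices arising in these decompositions can be removed by step~$4$ (each participates in a genuine point of $X$), the final matrix $\mathfrak{M}$ must have positive dimension. For the reverse direction, I would start from any surviving index $A$ of $\mathfrak{M}$ and repeatedly use complementarity to produce a bi-infinite sequence in $\Sigma_{\mathfrak{M}}$; the construction in the Proposition then identifies such a sequence with a point of $X$.

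For the minimality statement, I would take any proper submatrix $\mathfrak{N}$ of $\mathfrak{M}$ and pick an index $A$ that is present in $\mathfrak{M}$ but absent from $\mathfrak{N}$. Since $A$ survived step~$4$, the reverse argument above produces an element $x_A \in X$ whose decomposition uses the block indexed by $A$. This element cannot arise from any sequence in $\Sigma_{\mathfrak{N}}$, so $\Sigma_{\mathfrak{N}} \subsetneq X$, proving that $\mathfrak{M}$ is minimal among matrices of this form that characterize $X$.

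The main obstacle I expect to encounter is in the reverse direction of the equivalence: one has to verify that a bi-infinite sequence in $\Sigma_{\mathfrak{M}}$, assembled stage by stage via complementarity, can be correctly unfolded back through all $d-1$ levels of the inductive construction to yield a genuine configuration in $A^{\mathbb{Z}^d}$ belonging to $X$. Concretely, an index surviving at stage~$i$ must correspond to a block that is simultaneously extendable in all of the $d-i-1$ remaining directions, and the extensions chosen at successive stages must be compatible when assembled. Once this consistency is pinned down, the $2$-dimensional argument applies at each stage essentially verbatim, and the corollary follows by a straightforward induction on the dimension.
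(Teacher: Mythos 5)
Your proposal follows essentially the same route as the paper, which justifies this corollary only via the preceding Remarks: non-emptiness forces surviving indices (forward direction), complementarity of the surviving index set lets one assemble a bi-infinite sequence through any index and hence a point of $X$ (reverse direction and minimality). The consistency issue you flag for the reverse direction---that an index surviving at stage $i$ must genuinely extend through all remaining stages---is real, but the paper does not address it either, so your treatment is, if anything, more careful than the source.
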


\begin{Cor}
Let $X$ be a multidimensional shift space and let $\mathfrak{B}$ be
the infinite strips of height $l$ allowed in $X$. Then, the shift
space $X$ is non-empty if and only if there exists
$\mathfrak{B}_0\subseteq \mathfrak{B}$ such that $\mathfrak{B}_0$ is
complementary.
\end{Cor}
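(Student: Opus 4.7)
The plan is to derive this corollary from the $d$-dimensional matrix characterization of Proposition 3 and its preceding corollary, exactly in parallel to how the 2-dimensional analogue was argued following Proposition 2. The central link is that the ``complementary'' condition on a family of infinite strips is a precise translation of Step (4) of the matrix-generating algorithm, which deletes any index whose row or column is identically zero. Thus the equivalence I need should essentially be obtained by reading the same combinatorial fact in two different languages.

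For the forward direction, I would assume $X$ is non-empty. By the preceding corollary, the final matrix $\mathfrak{M}$ has positive dimension, so its index set $\mathfrak{B}_0 \subseteq \mathfrak{B}$ is non-empty. Step (4) of the algorithm guarantees that every $A \in \mathfrak{B}_0$ has at least one non-zero entry in both its row and its column; equivalently there exist $B, C \in \mathfrak{B}_0$ with $\mathfrak{m}_{BA} = 1$ and $\mathfrak{m}_{AC} = 1$, which in the $u/d$-related language introduced earlier means that $B$ is $u$-related to $A$ and $C$ is $d$-related to $A$. Hence $\mathfrak{B}_0$ is complementary.

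For the converse, given a complementary $\mathfrak{B}_0 \subseteq \mathfrak{B}$, I would start from an arbitrary $A_0 \in \mathfrak{B}_0$ and inductively select $A_{n+1} \in \mathfrak{B}_0$ which is $u$-related to $A_n$ and $A_{-n-1} \in \mathfrak{B}_0$ which is $d$-related to $A_{-n}$. The bi-infinite sequence $(A_n)_{n \in \mathbb{Z}}$ defines a stacking of consecutive compatible infinite strips along the last remaining direction and, by Proposition 3 together with the compatibility encoded in the entries of $\mathfrak{M}$, assembles into an element $x \in A^{\mathbb{Z}^d}$ containing no cube from the equivalent forbidden set. Consequently $x \in X$ and $X$ is non-empty.

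The main obstacle I anticipate is verifying that this informal ``stacking'' of infinite strips genuinely produces a configuration in $A^{\mathbb{Z}^d}$ free of forbidden cubes, since the indices of $\mathfrak{M}$ are themselves strips whose existence relies on a nested tower of intermediate matrices $\mathfrak{M}^0, \mathfrak{M}^1, \ldots$ from Proposition 3. Dispatching this requires tracing through the iterative construction once more to confirm that cube-level compatibility, enforced at the base by $\mathfrak{M}^0$, is preserved at every subsequent extension step. Once that bookkeeping is in place the equivalence becomes essentially tautological: complementary subfamilies of $\mathfrak{B}$ are in bijection with non-trivial index sets of $\mathfrak{M}$, which by the previous corollary are non-empty exactly when $X$ is.
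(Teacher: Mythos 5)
Your proposal is correct and follows essentially the same route as the paper: the paper gives no separate proof of this corollary, but justifies it (via the remark preceding the $2$-dimensional version) by exactly your identification of complementary subfamilies of $\mathfrak{B}$ with index sets surviving Step (4) of the algorithm, together with the equivalence of non-emptiness with positive dimension of $\mathfrak{M}$. Your explicit inductive construction of the bi-infinite sequence of $u$-related and $d$-related strips in the converse direction is a welcome elaboration of a step the paper leaves implicit, and the bookkeeping you flag (that pairwise compatibility of consecutive strips of height $l$ rules out all forbidden cubes, since a cube of side $l$ meets at most two consecutive strips) is the right point to check.
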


We now give some results relating the matrix $\mathfrak"{M}$ and the
dynamical behavior of the shift space $X$.

\begin{Proposition}
Let $X$ be a multidimensional shift space and let $\mathfrak{B}$ be
the infinite strips of height $l$ allowed in $X$. If there exists a
finite complementary set $\mathfrak{B}_0\subset \mathfrak{B}$, then
the set of periodic points is non-empty.
\end{Proposition}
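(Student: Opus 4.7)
The plan is to extract a directed cycle from the structure encoded by $\mathfrak{B}_0$, stack the strips along that cycle vertically to produce a configuration of $X$ that is periodic in the last coordinate direction, and then use the finiteness of the resulting column alphabet to upgrade to periodicity in every direction.

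First I would build the finite directed graph $G$ on vertex set $\mathfrak{B}_0$ by drawing an edge $A \to B$ precisely when $B$ is $u$-related to $A$, equivalently when the vertical adjacency $\left(\begin{array}{l} A \\ B \end{array}\right)$ is allowed in $X$. The assumption that $\mathfrak{B}_0$ is complementary is exactly the statement that every vertex of $G$ has positive out-degree and positive in-degree. Since $G$ is finite, the standard walk argument, start anywhere and keep following out-edges until a vertex repeats, produces a directed cycle $A_1 \to A_2 \to \cdots \to A_k \to A_1$. Repeating this block of strips periodically along the last coordinate axis yields a configuration $x^{*} \in A^{\mathbb{Z}^d}$; every $l$-cube in $x^{*}$ is contained in some adjacency $\left(\begin{array}{l} A_i \\ A_{i+1} \end{array}\right)$ (indices read mod $k$), which is allowed by the edge condition, so $x^{*} \in X$, and by construction $\sigma_{(0,\ldots,0,kl)}x^{*} = x^{*}$.

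To force periodicity in the remaining $d-1$ directions I would pass to the non-empty closed set $Z = \set{x \in X : \sigma_{(0,\ldots,0,kl)}x = x}$, which is invariant under each of the first $d-1$ coordinate shifts. Since a point of $Z$ is determined by its restriction to the slab $\mathbb{Z}^{d-1} \times \set{0, 1, \ldots, kl-1}$, encoding each column of height $kl$ as a single letter of the finite alphabet $A^{kl}$ identifies $Z$ with a subshift $\widetilde{Z}$ of $\bigl(A^{kl}\bigr)^{\mathbb{Z}^{d-1}}$. Because the $l$-cube constraints defining $X$ are local and fit inside two adjacent columns (as $l \leq kl$), they translate into local adjacency constraints on $\widetilde{Z}$, so $\widetilde{Z}$ is itself a $(d-1)$-dimensional SFT over a finite alphabet. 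In the case $d=2$ of primary interest $\widetilde{Z}$ is a non-empty one-dimensional SFT on a finite alphabet; applying the classical cycle-in-transition-graph argument to $\widetilde{Z}$ yields a horizontally periodic point of $\widetilde{Z}$ whose lift back to $X$ is periodic in both coordinate directions.

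I expect the main obstacle to be the extension to $d \geq 3$. There $\widetilde{Z}$ is a $(d-1)$-dimensional SFT, and, as recalled from Schmidt's work in the introduction, such SFTs can be non-empty and yet contain no periodic points, so the single cycle produced above is not enough by itself. The natural remedy is to iterate the construction: verify that the reduction from $X$ to $\widetilde{Z}$ inherits a finite complementary family of height-$l$ strips in the next coordinate direction, and then apply the two-dimensional argument once per direction. Establishing this inheritance, propagating the hypothesis of finite complementarity through the column-encoding reduction, is the delicate step on which the proof really turns, and is where I would concentrate the technical work.
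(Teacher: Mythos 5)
Your first paragraph is, in essence, the paper's entire proof, written out in more detail: the paper forms the finite submatrix $\mathfrak{N}$ of $\mathfrak{M}$ indexed by $\mathfrak{B}_{0}$, observes that complementarity means no row or column of $\mathfrak{N}$ is zero, and then invokes the fact that a shift defined by a finite $0$--$1$ matrix with this property is non-empty and contains periodic points. Your explicit cycle $A_{1}\to A_{2}\to\cdots\to A_{k}\to A_{1}$ and the vertically periodic configuration $x^{*}$ with $\sigma_{(0,\ldots,0,kl)}x^{*}=x^{*}$ are exactly the content of that invocation, so up to this point you and the paper coincide.

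Where you diverge is that you then try to upgrade $x^{*}$ to a point that is periodic in \emph{all} $d$ directions. The paper does not do this: its conclusion is only that the one-dimensional strip shift $\Sigma_{\mathfrak{B}_{0}}$ has a periodic point, and the final parenthetical ``(and hence for the shift space $X$)'' silently identifies a point periodic under the single translation $\sigma_{(0,\ldots,0,kl)}$ with a ``periodic point'' of the $\mathbb{Z}^{d}$-action; the paper never defines which notion of periodicity it intends. Your extra work is therefore a genuine strengthening rather than a required step. For $d=2$ your column-recoding argument is sound: $\widetilde{Z}$ is a non-empty one-dimensional SFT over the finite alphabet $A^{kl}$ (note the columns here are bi-infinite in the horizontal direction only after you fix the vertical period, so the alphabet really is finite), and the resulting cycle lifts to a doubly periodic point of $X$. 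For $d\geq 3$ you correctly identify that the reduction lands in a $(d-1)$-dimensional SFT which may have no periodic points, and that finite complementarity need not propagate through the recoding; this obstruction is real, but it is not something the paper overcomes either --- the paper simply stops at one-directional periodicity. In short: your proposal contains the paper's proof as its first step, honestly flags the ambiguity the paper glosses over, and resolves it in the case $d=2$.
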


\begin{proof}
Let $\mathfrak{B}$ be the infinite strips of height $l$ allowed in
$X$ and let $\mathfrak{B}_0\subset \mathfrak{B}$ be a finite
complementary set. By definition, elements of $\mathfrak{B}_0$ form
indices (not all) for the matrix $\mathfrak{M}$. Let $\mathfrak{N}$
be the submatrix of $\mathfrak{M}$ indexed by elements of
$\mathfrak{B}_0$. As the set $\mathfrak{B}_0$ is complementary, the
shift generated by $\mathfrak{B}_0$ (say $\Sigma_{\mathfrak{B}_0}$)
is non-empty. Further, as shift defined by a finite dimensional
matrix contains periodic points, there exists periodic points for
$\Sigma_{\mathfrak{B}_0}$ (and hence for the shift space $X$).
\end{proof}

\section{Conclusion}

In this paper, we investigate the non-emptiness problem for a
multidimensional shift space of finite type. In the process, we
prove that any multidimensional shift of finite type can be
characterized by an infinite square matrix of uncountable dimension.
We prove that the multidimensional shift space is non-empty if and
only if the matrix $\mathfrak{M}$ is of positive dimension. We also
prove that any submatrix of the matrix obtained generates a proper
subshift of $X$ and hence the matrix $\mathfrak{M}$ minimally
generates the elements of $X$. We further observe that non-emptiness
of such a shift may be examined using complementary set of indices.
However, construction of such a family of indices is non-trivial and
may not be possible in finite time. Consequently, the non-emptiness
problem for such a space is undecidable. We also provide a
sufficient condition for a multidimensional shift of finite type to
exhibit periodic points.\\

\bibliography{xbib}

\end{document}